\newtheorem{theorem}{Theorem}[section]
\newtheorem{lemma}[theorem]{Lemma}
\def\irr#1{{\rm  Irr}(#1)}
\def\phi{{\varphi}}
\begin{document}

\title[Camina pairs]{Camina pairs that are not $p$-closed}

\author{Mark L. Lewis}

\address{Department of Mathematical Sciences, Kent State University, Kent, OH 44242}

\email{lewis@math.kent.edu}



\begin{abstract}
We show for every prime $p$ that there exists a Camina pair $(G,N)$ where $N$ is a $p$-group and $G$ is not $p$-closed.  
\end{abstract}

\keywords{Camina pairs, Gagola characters, $p$-closed groups}
\subjclass[2010]{Primary 20C15}

\maketitle

\section{Introduction}

Our purpose in this note is to construct a number of examples.  Throughout this paper, all groups will be finite.  In this note, we focus on Camina pairs.  If $N$ is a normal subgroup of $G$, we say that $(G,N)$ is a {\it Camina pair} if every element $g \in G \setminus N$ is conjugate to all of $gN$.  This is one of a number of equivalent formulations of this condition.

Camina has proved (see Theorem 2 of \cite{camina}) that if $(G,N)$ is a Camina pair, then either $G$ is a Frobenius group with Frobenius kernel $N$ or at least one of $N$ or $G/N$ is a $p$-group for some prime $p$.  In particular, a Camina pair $(G,N)$ is a Frobenius group with Frobenius kernel $N$ if and only if $|G:N|$ and $|N|$ are relatively prime (see Proposition 1 of \cite{camina}).  The study of Frobenius groups has its own long history, so the study of Camina pairs has focused on Camina pairs that are not Frobenius groups.  These Camina pairs can be divided into three categories: (1) $G$ is a $p$-group, (2) $G/N$ is a $p$-group and $N$ is not a $p$-group but $p$ divides $|N|$, and (3) $N$ is a $p$-group and $G/N$ is not a $p$-group but $p$ divides $|G:N|$.

In this note, we are particularly interested in Camina pairs in category (3).  In particular, $N$ is a $p$-group for some prime $p$ and $G/N$ is not a $p$-group, but $p$ does divide $|G/N|$.  Let $P$ be a Sylow $p$-subgroup of $G$.  A group $G$ is said to be $p$-closed if $P$ is normal in $G$.  Nearly all of the known examples of Camina pairs of this type are $p$-closed (see \cite{ChMc} and\cite{ChMaSc}). In particular, the only published non $p$-closed examples are two solvable groups in \cite{gagola}: one with $p = 2$ and one with $p = 3$ and one nonsolvable example in \cite{nonsolv} where $p = 5$.  On page 274 of \cite{ChMaSc}, it is suggested that the only Camina pairs $(G,N)$ where $N$ is a $p$-group and not $p$-closed have $p = 2$ and $p = 3$.   However, we will show that there exist solvable, non $p$-closed examples for every prime $p$.

\begin{theorem} \label{three}
Let $p$ be any prime, then there exists a Camina pair $(G,N)$ where $N$ is a $p$-group, and $G$ is solvable and not $p$-closed.  
\end{theorem}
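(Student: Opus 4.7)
I would realize the required examples as semidirect products $G = N \rtimes H$, choosing $N$ as a special $p$-group built from a suitable $\mathbb{F}_p H$-module and $H$ as a small solvable group that is not $p$-closed. The non-$p$-closedness of $G$ would then be inherited from $H$: since $N$ is normal in $G$, any Sylow $p$-subgroup of $G$ has the form $NP$ for a Sylow $p$-subgroup $P$ of $H$, and the normality of $NP$ in $G$ would force $P \triangleleft H$. So it suffices to build the $p$-group piece correctly while taking $H$ with a non-normal Sylow $p$-subgroup.

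For each prime $p$, a natural choice of $H$ is the non-abelian group of order $pq$, with $q$ a prime satisfying $q \equiv 1 \pmod{p}$ (which exists by Dirichlet). This group is solvable with non-normal Sylow $p$-subgroup. I would then select a faithful $\mathbb{F}_p H$-module $V$ carrying a non-degenerate $H$-invariant alternating form $b$, and use a Heisenberg-type construction to produce a special $p$-group $N$ with $N/\gpcen{N} \cong V$ and $\gpcen N$ a trivial $H$-module. Both $V$ and $\gpcen N$ must be chosen with care; in particular, $\gpcen N$ may need to be larger than the prime field so as to accommodate the $p$-singular elements of $H$.

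The heart of the proof is verifying that $(G,N)$ satisfies the Camina condition. Using the character-theoretic reformulation, I would need every $\chi \in \irr{G}$ with $N \not\subseteq \ker \chi$ to vanish off $N$. For each coset $nhN$, the $N$-conjugates of $nh$ sweep out the coset modulo the subgroup $[N,h]\gpcen N$, so the Camina condition reduces to arranging that for every $1 \neq h \in H$, the subgroup $[N,h]\gpcen N$, together with $H$-conjugation, covers all of $N$. For $p'$-elements $h \in H$ this amounts to the Frobenius-type condition $(h-1)V = V$, which is a standard constraint on the $H$-module $V$. For $p$-elements $h \in H$, where $h - 1$ is nilpotent on $V$, the symplectic form $b$ and the structure of $\gpcen N$ have to compensate: the commutator map must push the fixed subspace of $h$ on $V$ non-trivially into $\gpcen N$, and these $\gpcen N$-contributions together with $H$-conjugation must exhaust $nhN$.

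\textbf{Main obstacle.} The real difficulty is precisely this $p$-singular part of the argument. Because every non-trivial $p$-element of $H$ acts unipotently on the $\mathbb{F}_p$-module $V$, and hence has a non-trivial fixed subspace, the naive Heisenberg extension cannot be a Camina pair; one must genuinely exploit the commutator form on $N$ and carefully design $V$ (and possibly enlarge $\gpcen N$) so that $[N,h]\gpcen N$ is large enough. Producing such $(V,b)$ uniformly for all primes $p$, while keeping $G$ solvable and with non-normal Sylow $p$-subgroup, is the key technical point where the bulk of the work will go, and it is the reason the only previously known solvable examples were Gagola's highly special constructions for $p=2$ and $p=3$.
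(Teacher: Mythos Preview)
Your proposal has a genuine obstruction that cannot be overcome with the complement $H$ of order $pq$ you suggest. Recall that if $(G,N)$ is a Camina pair and $g\in G\setminus N$, then the conjugacy class $g^G$ is a union of cosets of $N$, so $|N|$ divides $|G:C_G(g)|$ and hence $|C_G(g)|$ divides $|G:N|$. In your setup $|G:N|=|H|=pq$. Take $h\in H$ of order $p$. Then $N\langle h\rangle$ is a $p$-group, so $N\cap Z(N\langle h\rangle)\neq 1$ and thus $C_N(h)\neq 1$. Since $\langle h\rangle\cap N=1$, this forces $|C_G(h)|\ge p\cdot|C_N(h)|\ge p^2$, and $p^2\nmid pq$. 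So no choice of $(V,b)$ or enlargement of $Z(N)$ can make $(G,N)$ a Camina pair when $|G:N|=pq$; the ``main obstacle'' you identify is in fact fatal for this shape of construction. (Taking $Z(N)$ to be a trivial $H$-module, as you do, only makes things worse: then $Z(N)\le C_N(h)$.)

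The paper's proof proceeds along rather different lines. It does not attempt to take $N$ equal to the normal $p$-core of $G$. Instead it works over $F=\mathbb{F}_{p^{\,p}}$, takes $K=\mathcal{K}(F)$ (Heisenberg group $\mathcal{H}(F)$ extended by $F^{*}$), which already has a Gagola character, and then extends by the Galois group of $F/\mathbb{Z}_p$ (cyclic of order $p$) to form $\mathcal{G}(F)$. A short character-theoretic lemma then locates an index-$p$ subgroup $H$ of $\mathcal{G}(F)$, different from $K$, which inherits a Gagola character and is not $p$-closed. The resulting Camina pair is $(H,Z)$ with $Z=Z(\mathcal{H}(F))$ of order $p^{\,p}$, while the Sylow $p$-subgroup of $H$ has order $p^{3p}$; thus $|H:Z|=p^{2p}(p^{\,p}-1)$ has a large $p$-part, leaving plenty of room for centralizers of $p$-elements outside $Z$. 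The key structural point you are missing is precisely this: in the non-$p$-closed examples, $N$ must sit far below the Sylow $p$-subgroup, not coincide with ${\bf O}_p(G)$.
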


In \cite{sylow}, it is shown that if $(G,N)$ is a Camina pair where $N$ is a $p$-group and $G$ is solvable, then $G$ has $p$-length at most $2$.  This shows that $p$-length equal to $2$ can occur for every prime $p$.

\section{Construction} \label{sec 2}

Let $F$ be a field of order $p^n$ where $p$ is a prime and $n$ is a positive integer.  A Sylow $p$-subgroup of ${\rm GL}_3 (F)$ can be represented with the following elements $\{ (a,b,c) \mid a,b,c \in F \}$ where the multiplication is as follows $(a_1,b_1,c_1) (a_2,b_2,c_2) = (a_1 + a_2, b_1 + b_2, c_1 + c_2 + a_1 \cdot b_2)$.  Sometimes this group is known as the Heisenberg group for $F$, so we will denote it by ${\mathcal H} (F) = {\mathcal H} (n,p)$.  (It is also seen to be the $3 \times 3$ upper triangular group with diagonal entries $1$.)  A quick computation shows that $[(a_1,b_1,c_1),(a_2,b_2,c_2)] = (0,0,a_1 b_2 - a_2 b_1)$.  From this, it is not difficult to see that $Z = \{ (0, 0, c) \mid c \in F \}$ is both the derived subgroup and the center of ${\mathcal H} (F)$.  Another computation shows that if $ab \ne 0$, then $(a,b,c)$ is conjugate to all of $(a,b,c)Z$ in ${\mathcal H} (F)$.  Hence, $(\mathcal {H}(F),Z)$ is a Camina pair.

As seen in \cite{gagola} and \cite{isasny}, there is a natural action of the multiplicative group $F^*$ of $F$ on ${\mathcal H} (F)$ that can be found in ${\rm GL}_3 (F)$.  Abstractly, this action can be written as $(a,b,c) \cdot x = (a,bx,cx)$ for all $(a,b,c) \in {\mathcal H} (F)$ and $x \in F^*$.  We write $\mathcal {K} (F) = \mathcal {K}(n,p)$ for the resulting semi-direct product of $F^*$ acting on $\mathcal {H} (F)$.  Consider the subgroup $B = \{ (0,b,c) \mid b,c \in F \}$ in $\mathcal {H} (F)$, and observe that $B F^*$ is a Frobenius group.  Since $Z \le B$, we see that $T = Z F^*$ is a Frobenius group with Frobenius kernel $Z$.  Since $\mathcal {K} (F) = \mathcal {H} (F) T$,  we may use Lemma 2.2 of \cite{ChMc} to see that $(\mathcal {K}(F), Z)$ is a Camina pair.  On the other hand, we see that $C_{\mathcal {H}(F)} (F^*) = \{ (a,0,0) \mid a \in F \}$, so $\mathcal {K} (F)$ is not a Frobenius group.  One can show that $[\mathcal {H}(F),F^*] = \{ (0,b,c) \mid b,c \in F \} = B$.  The key fact for building our examples is that ${\bf O}^p (\mathcal {K} (F)) = BF^*$ has index $p^n$ in $\mathcal {K}(F)$.

Take $S$ to be the Galois group for $F/Z_p$.  There is a natural action of $S$ on $\mathcal {K}(F)$ as follows: if $\sigma \in S$, then $(a,b,c) \cdot \sigma = (a^\sigma,b^\sigma,c^\sigma)$ for all $(a,b,c) \in P$ and $x \cdot \sigma = x^\sigma$ for all $x \in F^*$.  We write $\mathcal {G}(F) = \mathcal {G} (n,p)$ to be the resulting semi-direct product of $S$ acting on $\mathcal {K} (F)$.

\section{Theorem \ref{three}}

In this section, we prove Theorem \ref{three}.

It is useful to work in the situation studied by Gagola in \cite{gagola}.  Gagola studied groups that had an irreducible character $\chi$ so that $\chi$ vanishes on all but two conjugacy classes.  We will say that $\chi$ is a Gagola character.  It is shown that if $G$ has a Gagola character, then $G$ has a unique minimal normal subgroup $N$ and $N$ is elementary abelian $p$-group for some prime $p$ (see Lemma 2.1 of \cite{gagola}).  Using Theorem 2.5 (a) of \cite{gagola} and Proposition 3.1 of \cite{ChMc}, we see that $(G,N)$ is a Camina pair.  In fact, it is shown on page 383 of \cite{gagola} that $\mathcal K (n,p)$ has a Gagola character when $p$ is a prime and $n$ is a positive integer so that $p^n > 2$.

The work to proving Theorem \ref{three} is contained in the following lemma.  It would be interesting to see under what conditions this lemma can be generalized to Camina pairs.

\begin{lemma} \label{work}
Suppose $p$ is a prime.  Let $G$ be a group that has a subgroup $K$ of index $p$ so that $|K| > 2$, $K$ has a Gagola character $\theta$, and ${\bf O}_p (G) > 1$ is the Sylow $p$-subgroup of $K$.  Suppose that $M$ is normal in $G$ so that $M \le K$ and $|K:M| = p$.  If $H$ is a subgroup of $G$ so that $M < H < G$, $H \ne K$, and $H$ does not stabilize an irreducible constituent of $\theta_M$, then $H$ has a Gagola character and $H$ is not $p$-closed.
\end{lemma}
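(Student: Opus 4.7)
I would construct a Gagola character of $H$ as the induced character $\phi:=\alpha^H$ for a suitable constituent $\alpha$ of $\theta_M$, and deduce non-$p$-closedness from a separate group-theoretic argument.

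First, some basic structure. Since $|K:M|=p$ is prime and $M<H<G$ with $H\ne K$, the subgroup lattice forces $HK=G$, $H\cap K=M$, and $|H:M|=p$. By Lemma~2.1 of \cite{gagola}, $K$ has a unique minimal normal subgroup $N$, which is an elementary abelian $p$-group and the Camina kernel of $K$; in particular $\theta$ vanishes on $K\setminus N$ and $N\setminus\{1\}$ is a single $K$-conjugacy class. Uniqueness makes $N$ $G$-invariant, and a minimality argument using the Gagola degree relation $\theta(1)^2+(|N|-1)|\theta(x)|^2=|K|$ (for $x\in N\setminus\{1\}$) together with $|K|>2$ excludes $N\cap M=1$, yielding $N\le M$.

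Now take $\alpha$ an irreducible constituent of $\theta_M$ not stabilized by $H$; since $|H:M|=p$, $I_H(\alpha)=M$ and Clifford gives $\phi:=\alpha^H\in\irr{H}$ of degree $p\alpha(1)$. The induction formula gives $\phi\equiv 0$ on $H\setminus M$, and Mackey applied to $HK=G$, $H\cap K=M$ gives $\theta^G|_H=(\theta_M)^H$, which vanishes on $H\setminus N$ since $\theta^G$ vanishes on $G\setminus N$. Decomposing $(\theta_M)^H$ into characters induced from $H$-orbits on the constituents of $\theta_M$ (and distinguishing the two Clifford cases $\theta_M=\alpha$ versus $\theta_M=\alpha_1+\cdots+\alpha_p$), the same vanishing transfers to $\phi$. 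On $N\setminus\{1\}$, $\phi$ is the nonzero constant $p\theta(x)$ by the Camina property of $(K,N)$. To complete the proof that $\phi$ is Gagola, I need to show $N\setminus\{1\}$ is a single $H$-conjugacy class; this should follow from the non-stabilization hypothesis combined with the Clifford decompositions of $\theta_M$ and $\theta_N$ (whose constituents together exhaust $\irr{N}$).

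For non-$p$-closedness: writing $P=O_p(G)$, the subgroup $P\cap M$ is a normal $p$-subgroup of $H$ of order $|P|/p$, exactly one factor of $p$ less than the Sylow $p$-subgroup of $H$. For any $p$-element $s\in H\setminus M$, $H$ is $p$-closed iff $s$ acts trivially on $M/(P\cap M)$, and I would use the non-stabilization $\alpha^s\ne\alpha$ via Clifford theory for $P\cap M\trianglelefteq M$ to rule this out. The principal obstacles are the transitivity of $H$ on $N\setminus\{1\}$ in the Gagola step, and the conversion from character-theoretic non-stabilization to group-theoretic non-triviality of the action on $M/(P\cap M)$ in the non-$p$-closed step; both hinge on careful Clifford-theoretic bookkeeping along the normal chain $N\le P\cap M\trianglelefteq M\trianglelefteq K$.
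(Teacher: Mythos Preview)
Your construction of $\phi=\alpha^{H}$ is the same starting point as the paper's, and your Mackey computation $(\theta^{G})|_{H}=(\theta_{M})^{H}$ is a valid route to the vanishing of $\phi$ on $H\setminus N$ (note that since $\theta$ is the unique Gagola character of $K$ it is $G$-invariant, so $\theta_{M}$ is necessarily reducible and $(\theta_{M})^{H}=p\phi$; incidentally this forces $\phi(x)=\theta(x)$, not $p\theta(x)$, on $N\setminus\{1\}$). The paper, however, exploits the $G$-invariance of $\theta$ more directly: $\theta$ extends to some $\chi\in\irr{G}$, and a degree count shows $\chi_{H}=\phi$. This single observation dissolves your first obstacle, since $\phi_{N}=\chi_{N}=\theta_{N}$ immediately contains every nonprincipal character of $N$, giving transitivity of $H$ on $N\setminus\{1\}$ without any further Clifford bookkeeping along $N\le L\le M$. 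The paper then finishes the Gagola claim by showing that the stabilizer in $H$ of a nonprincipal $\lambda\in\irr{N}$ is a full Sylow $p$-subgroup $P$ of $H$ and that $\lambda$ is fully ramified with respect to $P/N$.

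Your second obstacle is more serious. The paper does \emph{not} derive non-$p$-closedness from the character hypothesis at all; it is a purely group-theoretic consequence of $H\neq K$. Writing $L=M\cap{\bf O}_{p}(G)$ (the unique Sylow $p$-subgroup of $M$) and taking any Sylow $p$-subgroup $P$ of $H$, one has $P\cap{\bf O}_{p}(G)=L$, so $|P\,{\bf O}_{p}(G):P|=p$ and ${\bf O}_{p}(G)$ normalizes $P$. Since $K={\bf O}_{p}(G)M$ (coprime indices in $K$) and $G=KH$, one gets $G={\bf O}_{p}(G)H$; hence if $P\trianglelefteq H$ then $P\trianglelefteq G$, forcing $P\le{\bf O}_{p}(G)$, which is impossible as $|P|=|{\bf O}_{p}(G)|$ but $P\not\le M$. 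Your proposed route, by contrast, needs the implication ``$[M,s]\le L\ \Rightarrow\ \alpha^{s}=\alpha$'', and this is not clear: even when $s$ centralizes $M/L$ it still acts nontrivially on $L$ and hence on $\irr{L}$, so there is no evident reason it should fix the characters of $M$ lying over a given constituent of $\alpha_{L}$. The paper's argument sidesteps this difficulty entirely.
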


\begin{proof}
Let $N$ be the unique minimal normal subgroup of $K$.  By its uniqueness, it follows that $N$ is characteristic in $K$, so $N$ is normal in $G$.  We know that $\theta_N$ has all the nonprincipal characters in $\irr N$ as constituents (see Lemma 2.1 of \cite{gagola}).  Since ${\bf O}_p (G) > 1$ is a normal subgroup of $K$, we must have that $N \le {\bf O}_p (G)$.  Let $L = M \cap {\bf O}_p (G)$, and observe that $K/L = {\bf O}_p (G)/L \times M/L$, so $K$ is not a Frobenius group.  Let $\lambda \in \irr N$ be a nonprincipal character, and we have seen that $\lambda$ is an irreducible constituent of $\theta_N$.  Then we know that ${\bf O}_p (G)$ is the stabilizer of $\lambda$ in $K$, and $\lambda$ is fully ramified with respect to ${\bf O}_p (G)/N$ (see Corollary 2.3 of \cite{gagola}).

It follows that $L$ is the stabilizer of $\lambda$ in $M$.  Write $\zeta$ for the character in $\irr {{\bf O}_P (G) \mid \theta}$ so that $\zeta^K = \theta$.  Let $\eta$ be an irreducible constituent of $\zeta_L$.  Notice, that if $\eta = \zeta_L$, then $\eta (1)/\lambda (1) = \theta (1) = |{\bf O}_p (G):N|^{1/2} > |L:N|^{1/2}$ which contradicts Corollary 2.30 of \cite{text}.  Thus, $\zeta (1) = p \eta (1)$, and we see that $\irr {L \mid \lambda}$ contains $p$ distinct characters.  Let $\mu = \eta^M \in \irr {M \mid \lambda}$.  We see that $\mu^K = (\eta^M)^K = \eta^K = (\eta^{{\bf O}_p (G)})^K = (\zeta)^K = \theta$.  Hence, $\mu$ is an irreducible constituent of $\theta_M$.  It follows that $\mu$ is not $H$-invariant.  Since $|H:M| = p$, we see that $\gamma = \mu^H \in \irr H$ (see Corollary 6.19 in \cite{text}).

We now prove that $\gamma$ is a Gagola character for $H$.  By Lemma 2.1 of \cite{gagola}, $\theta$ is the unique Gagola character in $K$, and so, it is $G$-invariant.  As $|G:K| = p$, it follows that $\theta$ extends to $G$ and every irreducible constituent of $\theta^G$ is an extension of $\theta$.  Let $\chi$ be an irreducible constituent of $\gamma^G$.  We see that $\chi$ is a constituent of $\gamma^G = (\mu^H)^G = (\mu^K)^G = \theta^G$.  This implies that $\chi$ is an extension of $\theta$.  Thus, $\chi (1) = \theta (1) = |K:M| \mu (1) = p\mu (1) = |H:M| \mu (1) = \gamma (1)$.  We deduce that $\chi_H = \gamma$.  This implies that $\gamma_N = \chi_N$.  Thus, every nonprincipal character in $\irr N$ is a constituent of $\gamma_N$, and so, $H$ acts transitively on $\irr N \setminus \{ 1 \}$.  By Theorem 6.32 of \cite{text}, $H$ acts transitively on $N \setminus \{ 1 \}$.  We see that $|H| = |K| = |K|_p (|N|-1)$.  It follows that the stabilizer of $\lambda$ in $H$ is a Sylow $p$-subgroup $P$ of $H$.  Let $\tau \in \irr {P \mid \lambda}$ be the Clifford correspondent for $\gamma$.  Then $(|N| - 1) \tau (1) = \gamma (1) = \chi (1) = \theta (1) = (|N| - 1) e$ where $e^2 = |{\bf O}_p (G):L|$.  Thus, $\gamma (1) = e$ and $e^2 = |{\bf O}_p (G):L| = p|L:N| = |P:N|$.  Since $\lambda$ is $P$-invariant, this implies that $\lambda$ is fully ramified with respect to $P/N$.  It is not difficult to see that this implies that $\gamma$ vanishes on $H \setminus N$, and since $N$ consists of two conjugacy classes in $H$, we conclude that $\gamma$ is a Gagola character for $H$.

Finally, we need to show that $H$ is not $p$-closed.  Since $|G:K| = |G:H| = p$ and $H \ne K$, we have $G = KH$.  We see that $H \cap K = M$.  Also, $M$ and ${\bf O}_p (G)$ have coprime indices in $K$, so $K = {\bf O}_p (G) M$, and this implies that $G = KH = ({\bf O}^p (G) M)H = {\bf O}^p (G)H$.  Notice that $P$ has index $p$ in $P {\bf O}_p (G)$, so ${\bf O}_p (G)$ normalizes $P$.  Thus, if $H$ also normalizes $P$, then $P$ would be normal in $G$ which is a contradiction since $P$ is not contained in ${\bf O}_p (G)$.  Therefore, $H$ is not $p$-closed.  This proves the lemma.
\end{proof}

The following theorem includes Theorem \ref{three}.

\begin{theorem}
Let $p$ be any prime, then there exists a Camina pair $(G,N)$ where $N$ is a $p$-group, and $G$ is solvable and not $p$-closed.  In particular, $G$ can be chosen to have a Gagola character.
\end{theorem}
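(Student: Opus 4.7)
The plan is to apply Lemma~\ref{work} to the construction of Section~\ref{sec 2} with $n = p$. Set $F = \mathbb{F}_{p^p}$, $K = \mathcal{K}(p,p)$, and $G = \mathcal{G}(p,p)$, so that $|G : K| = |S| = p$ and $|K| = p^{3p}(p^p - 1) > 2$. Because $p^p > 2$, the construction on page 383 of \cite{gagola} gives $K$ a Gagola character $\theta$. The Sylow $p$-subgroup $\mathcal{H}(F)$ of $K$ is characteristic in $K$ and hence normal in $G$; a Sylow count in the quotient $G / \mathcal{H}(F) \cong F^* \rtimes S$ (using that $S$ acts nontrivially on $F^*$ by Galois) shows $S$ is not normal there, so ${\bf O}_p(G) = \mathcal{H}(F)$ is exactly the Sylow $p$-subgroup of $K$, as required.

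Next I produce $M$. Since ${\bf O}^p(K) = B F^*$ is characteristic in $K$ and therefore normal in $G$, and $K / B F^* \cong F$ is an $\mathbb{F}_p$-space on which $G$ acts through the Galois action of $S$, I take $M$ to be the pullback in $K$ of $\ker{\mathrm{Tr}}$, where $\mathrm{Tr} \colon F \to \mathbb{F}_p$ is the field trace. Surjectivity and $S$-invariance of $\mathrm{Tr}$ give $M \triangleleft G$ with $|K : M| = p$. The induced $S$-action on $K / M \cong \mathbb{F}_p$ is trivial (as the Galois group fixes the trace and $\mathrm{Aut}(\mathbb{Z}/p\mathbb{Z})$ has order coprime to $p$), so $G / M \cong (\mathbb{Z} / p\mathbb{Z})^2$ and there are exactly $p + 1$ subgroups of $G$ of index $p$ containing $M$, one being $K$.

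Finally I choose $H$. As in the proof of Lemma~\ref{work}, $\theta_M$ has precisely $p$ irreducible constituents, on which $K/M$ acts regularly. The resulting homomorphism $\rho \colon G/M \to \sym{p}$ has image of order dividing both $|G/M| = p^2$ and $|\sym{p}| = p!$, hence dividing $\gcd(p^2, p!) = p$, while containing a $p$-cycle; so the image has order exactly $p$ and $\ker{\rho} = H_0 / M$ is a single complement to $K/M$ in $G/M$. Since $p + 1 \geq 3$, there exists a subgroup $H$ with $M < H < G$ and $H \notin \{ K, H_0 \}$. Then $H/M$ has order $p$ and acts on the $p$ constituents as a nontrivial $p$-cycle, so fixes none; Lemma~\ref{work} then equips $H$ with a Gagola character and shows $H$ is not $p$-closed. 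As a subgroup of the solvable group $G$, the group $H$ is solvable, and any group with a Gagola character is a Camina pair with its unique minimal normal subgroup (an elementary abelian $p$-group), completing the argument. The main obstacle is guaranteeing a good $H$ exists; this reduces to the fact that the Sylow $p$-subgroup of $\sym{p}$ has order $p$, which keeps $\ker{\rho}$ large enough to leave room for at least one valid choice of $H$ beyond $K$ and $H_0$.
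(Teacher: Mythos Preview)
Your proof is correct and follows the same strategy as the paper: apply Lemma~\ref{work} inside $G=\mathcal{G}(p,p)$ with $K=\mathcal{K}(p,p)$, produce a normal $M\le K$ of index $p$, and choose $H$ among the $p+1$ index-$p$ subgroups of $G$ over $M$ avoiding both $K$ and the common stabilizer of the constituents of $\theta_M$. The only differences are in execution---you build $M$ explicitly as the pullback of $\ker(\mathrm{Tr})$ where the paper appeals to an abstract $p$-group argument, and you identify the common stabilizer as $\ker\rho$ via $|\rho(G/M)|\mid\gcd(p^2,p!)=p$ where the paper instead notes that the stabilizer of one constituent is normal in the abelian group $G/M$ and hence stabilizes them all.
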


\begin{proof}
Let $F$ be the field of order $p^p$.  Take $G = \mathcal {G} (F) = \mathcal {G} (p,p)$ as constructed in Section \ref{sec 2}.  Let $K = \mathcal {K} (F)$ be as defined there.  We have $|K| = p^{3p} (p^p - 1) > 2$.  As was observed above, $K$ has a Gagola character $\theta$.  Observe that $|G:K| = p$, and note that ${\bf O}_p (K) > 1$ is the Sylow $p$-subgroup of $K$.  Observe that $G/{\bf O}_p (K)$ is the semi-direct product of a group of order $p$ acting on a cyclic group of order $p^p - 1$ coming from the Galois group $S$ of $F/Z_p$ on the multiplicative group of $F$.  Thus, a Sylow subgroup of $G/{\bf O}_p (K)$ has order $p$ and is not normal.  This implies that ${\bf O}_p (G) = {\bf O}_p (K)$.

We noted in Section \ref{sec 2} that ${\bf O}^p (K)$ has index $p^p$ in $K$.  Since $|G:K| = p$, it follows that ${\bf O}^p (G) = {\bf O}^p (K)$.  Since $G/{\bf O}^p (K)$ is a $p$-group of order $p^{p+1}$, we can find $M$ normal in $G$ and contained in $K$ so that $|K:M| = p$.  Note that $S \le G$ has order $p$ and is not contained in $K$.  Thus, $MS/M$ will be a subgroup of $G/M$ of order $p$ that is different that $K/M$.  This implies that $G/M$ is not cyclic.  Since $|G:M| = p^2$, we see that $G/M$ is elementary abelian.

Let $\gamma$ be an irreducible constituent of $\theta_M$, and let $T$ be the stabilizer of $\gamma$ in $G$.  Notice that $T$ will be normal in $G$, so it will be the stabilizer of all of the irreducible constituents of $\theta_M$.  We know that $G/M$ has exactly $p + 1 \ge 3$ subgroups of order $p$.  Thus, we can find $H/M$ a subgroup of $G/M$ having order $p$ that is not $K/M$ or $T/M$.  I.e., $H$ has index $p$ and is not $K$ and does not stabilize any irreducible constituent of $\theta_M$.  We can now apply Lemma \ref{work} to see that $H$ is a Gagola group that is not $p$-closed.  This proves the theorem.
\end{proof}


\begin{thebibliography}{99}
\bibitem{nonsolv} Z.~Arad, A.~Mann, M.~Muzychuk, and C.~Pech, On non-solvable Camina pairs, {\it J. Algebra} {\bf 322} (2009), 2286–2296.
\bibitem{camina} A.~R.~Camina, Some conditions which almost characterize Frobenius groups, {\it Israel J. Math.} {\bf 31} (1978), 153-160.
\bibitem{ChMc} D.~Chillag and I.~D.~MacDonald, Generalized Frobenius groups, {\it Israel J. Math.} {\bf 47} (1984), 111-122.
\bibitem{ChMaSc}  D.~Chillag, A.~Mann, and C.~M.~Scoppola, Generalized Frobenius groups II, {\it Israel J. Math.} {\bf 62} (1988), 269-282.
\bibitem{gagola} S.~M.~Gagola, Jr., Characters vanishing on all but two conjugacy classes, {\it Pac. J. Math.} {\bf 109} (1983), 363-385.
\bibitem{text} I.~M.~Isaacs, ``Character Theory of Finite Groups,'' Academic Press, San Diego, California, 1976.
\bibitem{isasny} I.~M.~Isaacs, Bounding the order of a group with a large character degree. {\it J. Algebra} {\bf 348} (2011), 264–275.
\bibitem{sylow} E.~B.~Kuisch, Sylow $p$-subgroups of solvable Camina pairs, {\it J. Algebra} {\bf 156} (1993), 395-406.
\end{thebibliography}
\end{document}